\theoremstyle{plain}
\newtheorem{theorem}{Theorem}[section]
\newtheorem{lemma}[theorem]{Lemma}
\newtheorem{proposition}[theorem]{Proposition}
\theoremstyle{definition}
\newtheorem{definition}[theorem]{Definition}
\begin{document}

\title[Additive-Multiplicative Magic Squares]{Some Thoughts on the Search for $5 \times 5$ and $6 \times 6$ Additive-Multiplicative Magic Squares}

\author{Desmond Weisenberg}
\email{desmondweisenberg@gmail.com}

\keywords{}

\begin{abstract}
An \emph{additive-multiplicative magic square} is a square grid of numbers whose rows, columns, and long diagonals all have the same sum (called the magic sum) and the same product (called the magic product). There are numerous open problems about magic squares by Christian Boyer on multimagie.com. One such problem is to construct or prove the impossibility of a $5 \times 5$ or $6 \times 6$ additive-multiplicative magic square of distinct positive integers. Here, we present a possible approach to this problem and some partial results. We observe that such a square can be described by a form determined by the prime factorizations of its entries and that identifying these forms might be helpful in finding such a square or ruling out specific magic products.
\end{abstract}

\maketitle

\section{Introduction}

An \emph{additive-multiplicative magic square} is a square grid of numbers whose rows, columns, and long diagonals all have the same sum (called the magic sum) and the same product (called the magic product). (Note that long diagonal refers to the two diagonals of the square going from the upper-left element to the lower-right element and from the upper-right element to the lower-left element.) There are numerous open problems about magic squares by Christian Boyer on multimagie.com, and currently prizes of \texteuro1,000 and \texteuro500 along with a bottle of champagne are available to whoever can construct or prove the impossibility of a $5 \times 5$ or $6 \times 6$ additive-multiplicative magic square, respectively, of distinct positive integers \cite{Multimagie}.

Similar results are known for other sizes of additive-multiplicative magic squares of distinct positive integers. The $1 \times 1$ case is trivial, and it is known that there are no such squares of sizes 2, 3, or 4. In contrast, there are known examples of sizes 7, 8, and 9 \cite{Multimagie}. In fact, it might even be reasonable to conjecture that such a square exists for all sizes greater than this --- however, the cases where the size is 5 or 6 still remain unsolved. To get a sense of the difficulty of these problems, see Figure \ref{ExampleNearMiss} for a $5 \times 5$ square that is ``nearly'' additive-multiplicative but narrowly fails. An example of a $7 \times 7$ additive-multiplicative magic square is also provided in Section \ref{Pairwise Zones}, Figure \ref{ExampleAddMult}.

\begin{figure}[htb]
\begin{center}
\begin{tabular}{|c|c|c|c|c|}
\hline
105 & 182 & 40 & 198 & 45 \\
\hline
78 & 216 & 66 & 175 & 35 \\
\hline
220 & 42 & 65 & 63 & 180 \\
\hline
140 & 55 & 189 & 30 & 156 \\
\hline
27 & 75 & 210 & 104 & 154 \\
\hline
\end{tabular}
\caption{Discovered by Lee Morgenstern, this $5 \times 5$ square of distinct positive integers is ``nearly'' additive-multiplicative. All of its rows, all of its columns, and the upper-left to lower-right long diagonal have a sum of 570 and a product of $6810804000 = 2^5 * 3^5 * 5^3 * 7^2 * 11 * 13$. However, the upper-right to lower-left long diagonal does not have the same sum or product \cite{Multimagie}. \label{ExampleNearMiss}}
\end{center}
\end{figure}

Here, we present a possible approach to this problem and some partial results. Our approach is to observe that such a square can be described by a form determined only by the prime factorizations of its entries and that identifying these forms might be helpful in finding or eliminating such a square. We outline this idea fully in Section \ref{Square Forms}. Next, in Section \ref{Unacceptability Results}, we prove several significant results based on this idea. Though we focus on the $5 \times 5$ and $6 \times 6$ cases, many of the ideas and results of this paper are applicable to larger sizes as well.

\section{Square Forms}
\label{Square Forms}

Recall that every positive integer $P$ can be uniquely (up to order) written as $p_1^{n_1}p_2^{n_2}\cdots p_k^{n_k}$ for some nonnegative integer $k$, distinct primes $p_1, p_2, \cdots, p_k$, and positive integers $n_1, n_2, \cdots, n_k$. The factors of $P$ are the numbers of the form $p_1^{q_1}p_2^{q_2}\cdots p_k^{q_k}$ where $0 \leq q_i \leq n_i$ for all $i$. If we wish to consider the factors of $P$, then a helpful approach may be to not consider the numerical value of $P$ itself, but to instead consider the \emph{prime signature} of $P$ --- that is, the multiset $\{n_1, n_2, \cdots, n_k\}$. We can use the prime signature to write a form for the number with its prime factors replaced by variables so we can consider its factors. For example, $64,800 = 2^53^45^2$ has the form $a^5b^4c^2$, so each of its factors thus has the form $a^{q_1}b^{q_2}c^{q_3}$ where $0 \leq q_1 \leq 5$, $0 \leq q_2 \leq 4$, and $0 \leq q_3 \leq 2$.

How does this help us? Our goal is to find a $5 \times 5$ or $6 \times 6$ additive-multiplicative magic square of distinct positive integers. If we temporarily drop the additive requirement, we can construct \emph{forms} of magic squares based on the prime signature of the magic product as described above. To do so, we use the following definition. 

\begin{definition}
A \emph{square form} is a square grid of products of powers of variables (representing distinct prime factors) such that all elements are distinct and each row, column, and long diagonal has the same product. Throughout this paper, we will take square forms to have size $5 \times 5$ or $6 \times 6$.
\end{definition}

For example, a $6 \times 6$ square form with a magic product of $a^8b^5c^3d^2ef$ is given in Figure \ref{ExampleSquareForm}.

\begin{figure}[htb]
\begin{center}
\begin{tabular}{|c|c|c|c|c|c|}
\hline
$a^7$ & $adef$ & $b^4$ & $bd$ & $c^3$ & $1$ \\
\hline
$b^2e$ & $c$ & $a^5b$ & $a^2bf$ & $d^2$ & $abc^2$ \\
\hline
$bc$ & $a^2$ & $c^2d^2$ & $a^4be$ & $abf$ & $ab^2$ \\
\hline
$b^2d$ & $a^2c^2$ & $a^2cf$ & $b^2$ & $a^3be$ & $ad$ \\
\hline
$adf$ & $a^2b^4$ & $e$ & $ac^2$ & $b$ & $a^4cd$ \\
\hline
$c^2$ & $abd$ & $a$ & $acd$ & $a^4b^2$ & $ab^2ef$ \\
\hline
\end{tabular}
\caption{A $6 \times 6$ square form of distinct expressions with magic product $a^8b^5c^3d^2ef$. If these variables are taken to represent distinct primes, then the corresponding $6 \times 6$ magic square is multiplicative (but not necessarily additive) and contains distinct positive integers. \label{ExampleSquareForm}}
\end{center}
\end{figure}

One approach to finding a $5 \times 5$ or $6 \times 6$ additive-multiplicative magic square of distinct positive integers may be to construct such square forms and then search for distinct primes that can be assigned to their variables to make them additive. This approach could be helpful because when checking whether a given number can be the magic product of such a square, rather than computing arrangements of the factors of that specific number, we can instead find the square forms whose magic products have the appropriate prime signature and then see if assigning the prime factors of the given number to the variables of the square forms makes any of the squares additive. Then if we find nothing and we wish to check whether another number of the same prime signature can be a magic product, we just need to reuse the square forms we have already found.

After constructing such square forms, the question naturally turns to whether for a given square form there exist distinct primes that can be assigned to its variables to make it additive.

\begin{definition}
Define a square form as \emph{acceptable} if there exist distinct primes that can be assigned to its variables to make its rows, columns, and long diagonals have the same sum. Define a square form as \emph{unacceptable} otherwise.
\end{definition}

Obviously, it is unknown whether $5 \times 5$ or $6 \times 6$ acceptable square forms even exist, since the problem of finding an additive-multiplicative magic square of distinct entries in the first place is essentially equivalent to finding an acceptable square form. However, we can show that certain square forms are \emph{un}acceptable, allowing us to exclude them from our consideration.

\begin{proposition}
\label{ExampleSquareProposition}
The square form in Figure \ref{ExampleSquareForm} is unacceptable.
\end{proposition}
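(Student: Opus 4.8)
The plan is to show that no assignment of distinct primes to $a,b,c,d,e,f$ can equalize all fourteen line sums (six rows, six columns, two long diagonals), where each entry becomes the integer obtained by substituting the assigned primes into its monomial. The engine of the argument is reduction modulo $2$. The point is that every odd integer is congruent to $1$ modulo $2$, so for any fixed assignment the parity of a line's sum equals the number of odd entries in that line. Consequently, if the prime $2$ is assigned to a variable $v$, then an entry is odd exactly when its exponent of $v$ is zero, and the parity of each line's sum equals the number of its entries in which $v$ does not appear, taken modulo $2$. Since all lines must share the common magic sum $S$, all of these counts must have the same parity.

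First I would rule out assigning $2$ to any of the high-multiplicity variables $a,b,c,d$. For each such variable one tabulates, row by row, the number of entries in which it is absent. For example, if $c=2$ the first row of Figure \ref{ExampleSquareForm} contains five entries free of $c$ while every other row contains only four; thus the first row's sum would be odd and the remaining rows' sums even, contradicting the requirement that all rows share the sum $S$. The analogous tabulations for $a$, $b$, and $d$ likewise produce rows of differing parity, so $2 \notin \{a,b,c,d\}$. Only the rows are needed here; the columns and diagonals are not required.

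The obstacle is the residual set of assignments that mod $2$ cannot see. Because $e$ and $f$ occur to the first power in the magic product $a^8b^5c^3d^2ef$, each line---having product equal to the magic product---contains exactly one multiple of $e$ (and one of $f$), hence exactly five odd entries whenever $e$ or $f$ equals $2$; the parity is then constant and the argument is silent. The same blindness occurs when $2$ is not used at all, for then all six entries of every line are odd. I therefore expect the crux of the proof to be these three cases: $2$ assigned to $e$, to $f$, or unused, in all of which $a,b,c,d$ are now known to be odd. Here a subtler invariant is needed, since any congruence governed by a linear character of the exponent vectors is automatically constant across lines---each line carries the same total exponent of every variable, namely that of the magic product---so passing to residues modulo $4$ yields nothing new. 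I would attack the residual cases either by pushing to residues modulo $8$, where the counts of entries in each odd class are no longer linear functionals of the exponents, or by a direct magnitude comparison of two carefully chosen lines that exploits $a,b,c,d \ge 3$; securing a clean contradiction in these cases is the main difficulty.
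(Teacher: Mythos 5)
Your proposal is incomplete, and the gap is not a technicality: it is the bulk of the problem. The parity argument correctly eliminates the four assignments $2 \in \{a,b,c,d\}$ (your row tabulations check out; e.g.\ for $c=2$ the first row has five entries free of $c$ and every other row has four). But the remaining cases --- $e = 2$, $f = 2$, or $2$ assigned to no variable at all --- comprise all but finitely many of the possible assignments, and for these you offer only candidate directions (residues modulo $8$, or magnitude comparisons using $a,b,c,d \ge 3$) while explicitly conceding that ``securing a clean contradiction in these cases is the main difficulty.'' Neither direction is developed even in outline, and it is far from clear either would succeed: as you yourself observe, every line carries the same exponent vector in total, so invariants that depend only weakly on the multiplicative structure tend to be blind, and a size comparison between two lines of a multiplicative square is delicate because their products are forced to be equal. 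An unacceptability proof must exclude \emph{every} assignment of distinct primes, so what you have is a partial result, not a proof.

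For contrast, the paper's proof avoids any case analysis on the numerical value of a variable by reducing modulo the unknown prime $a$ itself rather than modulo a fixed integer. Comparing the fourth row with the fourth column (cancelling the shared entry $b^2$) gives
\begin{equation*}
b^2d + a^2c^2 + a^2cf + a^3be + ad = bd + a^2bf + a^4be + ac^2 + acd,
\end{equation*}
whence $b^2d \equiv bd \pmod{a}$ and so $b \equiv 1 \pmod{a}$; then comparing the $a$-free terms of the fourth and sixth columns gives $b^2 + bd \equiv 1 \pmod{a}$, which forces $d \equiv 0 \pmod{a}$, contradicting that $a$ and $d$ are distinct primes. This works uniformly for all assignments precisely because the surviving residues are genuinely different monomials in the other variables --- exactly the kind of nonlinearity you correctly identified as necessary but did not construct. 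If you want to salvage your approach, the lesson is to run your congruence argument modulo one of the square's own primes instead of modulo $2$.
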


\begin{proof}
For the sake of contradiction, suppose there exist distinct primes $a,\allowbreak b,\allowbreak c,\allowbreak d,\allowbreak e,\allowbreak f$ that, assigned to this square form, make it additive. Consider the fourth row and the fourth column, and note that we need not include the term where the row and column intersect, as it appears on both sides of the equality and thus can be cancelled. This results in the following equality:
\begin{equation*}
b^2d + a^2c^2 + a^2cf + a^3be + ad = bd + a^2bf + a^4be + ac^2 + acd.
\end{equation*}
This can be rewritten as
\begin{equation*}
b^2d + a(ac^2 + acf + a^2be + d) = bd + a(abf + a^3be + c^2 + cd).
\end{equation*}
It follows that $b^2d \equiv bd \pmod{a}$. Since $a$ is prime and $b$ and $d$ are not multiples of $a$, we can divide by $b$ and $d$ on both sides of this equation to get $b \equiv 1 \pmod{a}$. Similarly to above, we can consider the terms in the fourth and sixth columns with no $a$ component to see that $b^2 + bd \equiv 1 \pmod{a}$. Since $b \equiv 1 \pmod{a}$, we can replace $b$ with $1$ in this equation to get $1^2 + 1d \equiv 1 \pmod{a}$, which can be simplified to $d \equiv 0 \pmod{a}$. This contradicts the assumption that $a$ and $d$ are distinct primes. Since we have reached a contradiction, the given square form is unacceptable.
\end{proof}

This is just one example; other unacceptability proofs for different square forms may include different techniques. Of course, formulating an individualized proof for each square form quickly becomes impractical. Though it may be worthwhile to think about more efficient methods to do this, another thing we can do is formulate more general results about the unacceptability of certain square forms.

\section{Unacceptability Results}
\label{Unacceptability Results}

In this section, we prove some notable results about square forms and unacceptability. In Section \ref{Pairwise Zones}, we introduce the concept of ``pairwise zones'' of a square. In Section \ref{Further Results}, we use pairwise zones, along with other tools, to prove our main results about the square forms themselves.

\subsection{Pairwise Zones}
\label{Pairwise Zones}

Before going into further results on unacceptability, it is helpful to introduce the concept of what we call \emph{pairwise zones}. When discussing additivity, we have the condition that the square's \emph{strips} --- that is, its rows, columns, and long diagonals --- have the same sum. (This discussion similarly applies to multiplicativity and the magic product.) Furthermore, if we take two collections of strips in a square such that each collection has the same number of strips, it is clear that both collections will have the same sum. Moreover, if we have an equality of two collections of strips and the same element appears in both collections, we can remove it from both sides and still maintain this equality. For example, consider the collection of the top two rows and the collection of the left two columns of a $6 \times 6$ square. Taking the elements of these collections and subtracting the elements that appear in both yields two rectangles, so these two rectangles are guaranteed to have the same sum. As stated, this idea requires that both collections have the same number of strips. We can also assume without loss of generality that no single strip appears in both collections, since if any strips do appear in both collections, we can just subtract their elements from both sides of the corresponding equality until we get an equality that corresponds to collections with no single strips that appear on both sides.

Finally, we need a condition to ensure that after subtracting each instance of an element that appears on both sides of the equality, all the remaining elements appear exactly once on their side. This ensures that our final equality is actually an equality between two sums of distinct elements, which is necessary to ensure that it corresponds to two zones of the square. To achieve this, it may be tempting to require that no element appear in more than one strip within the same collection. However, this condition is actually too strong. For example, consider two collections of two strips of a $6 \times 6$ square: let the first collection consist of the top row and the left column, and let the second collection consist of both long diagonals. Then by subtracting elements that appear on both sides of the corresponding equality, we gain two zones of the square that must have the same sum. The condition that no element appear in more than one strip within the same collection is not satisfied, though, since the element in the upper-left square appears in both strips in the first collection. As such, we instead impose the weaker condition that for every element $x$ in the square, the number of times $x$ appears in the first collection and the number of times $x$ appears in the second collection differ by at most one. This way, after subtracting every instance of $x$ that appears on both sides of the corresponding equality, $x$ will ultimately appear at most once in the equality.

This motivates the following definition:

\begin{definition}
Let $X$ and $Y$ be two disjoint nonempty zones of the square. Suppose there exist two collections of strips such that no single strip appears in both collections, both collections have the same number of strips, and for every element $x$ in the square, the number of times $x$ appears in the first collection and the number of times $x$ appears in the second collection differ by at most one. Suppose $X$ is the region of the square containing the elements that appear once more in the first collection than in the second collection, and suppose $Y$ is the region of the square containing the elements that appear once more in the second collection than in the first collection. Then $X$ and $Y$ are said to be \emph{pairwise zones}.
\end{definition}

Observe that if the square is additive, then by our previous reasoning, the elements in any two pairwise zones must have the same sum. Similarly, if the square is multiplicative, then the elements in any two pairwise zones must have the same product. This idea will be important for some of our results, since we will be able to prove that certain square forms are unacceptable by finding properties of pairwise zones $X$ and $Y$ that guarantee that $X$ and $Y$ cannot have the same sum no matter what prime factors are assigned to the square forms.

The additive-multiplicative magic square in Figure \ref{ExampleAddMult} can be used to demonstrate examples of pairwise zones. For example, consider the zones defined by the top two rows and the left two columns, excluding the elements that appear in both. These zones contain the elements given by $\{50, 90, 48, 1, 84, 16, 54, 189, 110,\allowbreak 6\}$ and $\{100, 2, 96, 60, 3, 63, 99, 180, 21, 24\}$. Our above reasoning indicates that these zones must have the same sum and product. Indeed, they both have a sum of 648 and a product of $1955476131840000 = 2^{15} * 3^{11} * 5^4 * 7^2 * 11$.

\begin{figure}[htb]
\begin{center}
\begin{tabular}{|c|c|c|c|c|c|c|}
\hline
126 & 66 & 50 & 90 & 48 & 1 & 84 \\
\hline
20 & 70 & 16 & 54 & 189 & 110 & 6 \\
\hline
100 & 2 & 22 & 98 & 36 & 72 & 135 \\
\hline
96 & 60 & 81 & 4 & 10 & 49 & 165 \\
\hline
3 & 63 & 30 & 176 & 120 & 45 & 28 \\
\hline
99 & 180 & 14 & 25 & 7 & 108 & 32 \\
\hline
21 & 24 & 252 & 18 & 55 & 80 & 15 \\
\hline
\end{tabular}
\caption{The first known $7 \times 7$ additive-multiplicative magic square of distinct positive integers. Discovered by S\'{e}bastien Miquel on August 15, 2016, this square has a magic sum of 465 and a magic product of $150885504000 = 2^{10} * 3^7 * 5^3 * 7^2 * 11$ \cite{Multimagie}. \label{ExampleAddMult}}
\end{center}
\end{figure}

\subsection{Further Results}
\label{Further Results}

We now move on to some unacceptability results for $5 \times 5$ and $6 \times 6$ square forms. We start with a straightforward lemma whose use will become clear in Lemma \ref{BiggerZoneLemma}.

This lemma uses the sum-of-divisors function $\sigma(n)$, which is defined for all positive integers $n$ as the sum of positive factors of $n$, including 1 and itself. An important result in elementary number theory states that if $p_1, \cdots, p_k$ are distinct primes and $n_1, \cdots, n_k$ are non-negative integers, then
\begin{equation*}
\sigma(p_1^{n_1}p_2^{n_2}\cdots p_k^{n_k}) = \left(\frac{p_1^{n_1+1}-1}{p_1-1}\right)\left(\frac{p_2^{n_2+1}-1}{p_2-1}\right)\cdots\left(\frac{p_k^{n_k+1}-1}{p_k-1}\right).
\end{equation*}
For example, $\sigma(12) = 1 + 2 + 3 + 4 + 6 + 12 = 28$, and since $12 = 2^2 * 3$, the above formula yields $(\frac{2^3 - 1}{2 - 1})(\frac{3^2 - 1}{3 - 1}) = (7)(4) = 28$.

This function will be used in the following lemma.

\begin{lemma}
\label{SigmaLemma}
Let $p_1, \cdots, p_k$ be distinct primes, and let $n_1, \cdots, n_k$ be positive integers. Then
\begin{equation*}
\sigma(p_1^{n_1-1}p_2^{n_2-1}\cdots p_k^{n_k-1}) < p_1^{n_1}p_2^{n_2}\cdots p_k^{n_k}.
\end{equation*}
\end{lemma}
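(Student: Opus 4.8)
The plan is to exploit the multiplicativity already packaged into the $\sigma$ formula quoted above, so that the inequality splits into one easy bound per prime. Applying that formula to $p_1^{n_1-1}p_2^{n_2-1}\cdots p_k^{n_k-1}$ gives $\sigma(p_1^{n_1-1}\cdots p_k^{n_k-1}) = \prod_{i=1}^{k} \frac{p_i^{n_i}-1}{p_i-1}$, whereas the right-hand side of the claim is $\prod_{i=1}^{k} p_i^{n_i}$. Since both sides are products over the same index set $\{1,\dots,k\}$ (with $k \ge 1$, as the statement lists at least one prime), it suffices to compare the products factor by factor: I would prove the single-prime inequality $\frac{p_i^{n_i}-1}{p_i-1} < p_i^{n_i}$ for each $i$ separately and then multiply the bounds together.

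The single-prime inequality $\frac{p^{n}-1}{p-1} < p^{n}$ is where the (very light) work sits. Because $p$ is prime we have $p \ge 2$, hence $p-1 \ge 1$, so dividing $p^{n}-1$ by $p-1$ cannot increase it: $\frac{p^{n}-1}{p-1} \le p^{n}-1 < p^{n}$. Equivalently, one can note that $\frac{p^{n}-1}{p-1} = 1 + p + \cdots + p^{n-1}$ is a sum of $n$ terms each at most $p^{n-1}$, and already the bound $p\cdot p^{n-1}=p^n$ strictly exceeds it. Either way, this gives the factorwise bound with \emph{strict} inequality for every $i$.

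Finally I would assemble the global inequality from these local ones. Each $\frac{p_i^{n_i}-1}{p_i-1}$ is a positive real strictly smaller than the positive real $p_i^{n_i}$, and a product of strictly smaller positive reals is strictly smaller than the corresponding product (a one-line induction on $k$ makes this precise, using positivity to carry the strictness through). Combining, $\sigma(p_1^{n_1-1}\cdots p_k^{n_k-1}) = \prod_{i=1}^{k} \frac{p_i^{n_i}-1}{p_i-1} < \prod_{i=1}^{k} p_i^{n_i}$, which is exactly the claim. The only step that warrants any care at all is preserving strictness when passing from the factorwise bounds to the product, and this is harmless here since every factor is positive and the bound $p_i \ge 2$ makes each factor strict; so the lemma is essentially a routine consequence of the $\sigma$ formula, with no genuine obstacle to overcome.
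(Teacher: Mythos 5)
Your proposal is correct and follows the same route as the paper: apply the product formula for $\sigma$ and compare the two products factor by factor. The paper simply asserts the final comparison is ``clearly'' true, whereas you spell out the factorwise bound $\frac{p^n-1}{p-1} \le p^n - 1 < p^n$ and the preservation of strictness under multiplication --- a harmless elaboration of the same argument.
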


\begin{proof}
If $p_1, \cdots, p_k$ are distinct primes and $n_1, \cdots, n_k$ are positive integers, then
\begin{equation*}
\sigma(p_1^{n_1-1}p_2^{n_2-1}\cdots p_k^{n_k-1}) = \left(\frac{p_1^{n_1}-1}{p_1-1}\right)\left(\frac{p_2^{n_2}-1}{p_2-1}\right)\cdots\left(\frac{p_k^{n_k}-1}{p_k-1}\right),
\end{equation*}
which is clearly less than $p_1^{n_1}p_2^{n_2}\cdots p_k^{n_k}$.
\end{proof}

One way of showing unacceptability of a square form is by selecting two pairwise zones $X$ and $Y$ and showing that they can never have the same sum regardless of what prime factors are assigned to the square form. One way of doing this is by proving that one zone (say, $Y$) is ``bigger'' than the other (in this case, $X$) in the sense that no matter what primes are chosen, the sum of elements in $Y$ is always going to be larger than the sum of elements in $X$. One approach to doing this is by constructing a function $f$ that maps the elements in $X$ to the elements in $Y$ and then proving that for any choice of primes, each element in the image of $f$ is greater than the sum of elements that map to it. This is exemplified in the following result.

\begin{lemma}
\label{BiggerZoneLemma}
Suppose a square form has two pairwise zones $X$ and $Y$ such that there exists a function $f: X \to Y$ where a) every $x \in X$ is mapped to a multiple of itself, and b) if any $y \in Y$ is mapped to by some $x \in X$ such that some prime factor is raised to the same power in both $x$ and $y$, then no other element maps to $y$. Then the square form is unacceptable.
\end{lemma}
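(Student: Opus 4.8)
The plan is to prove the stronger quantitative statement that $\sum_{y \in Y} y > \sum_{x \in X} x$ for every assignment of distinct primes to the variables. Since $X$ and $Y$ are pairwise zones, any acceptable assignment would force these two sums to be equal; a strict inequality valid for all prime assignments therefore rules this out and establishes unacceptability. The entire argument thus reduces to a comparison of the two zone sums mediated by the map $f$.

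To carry this out, I would first partition $X$ into the fibers $f^{-1}(y)$ over $y \in \mathrm{im}(f)$ and reduce everything to the local claim that $\sum_{x \in f^{-1}(y)} x < y$ for each $y$ in the image. Granting this, summing over the image gives $\sum_{x \in X} x = \sum_{y \in \mathrm{im}(f)} \sum_{x \in f^{-1}(y)} x < \sum_{y \in \mathrm{im}(f)} y \leq \sum_{y \in Y} y$, where the final inequality holds because $\mathrm{im}(f) \subseteq Y$ and all entries are positive. Nonemptiness of $X$ guarantees at least one fiber and hence at least one strict local bound, so the overall inequality is strict.

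The heart of the proof is the local claim, which I would split into two cases according to hypothesis (b). Write the value of $y$ as $p_1^{m_1} \cdots p_k^{m_k}$ in the assigned primes. If some $x \in f^{-1}(y)$ has a prime factor raised to the same power as in $y$, then (b) forces $x$ to be the unique preimage; since $x \mid y$ by (a) while $x \neq y$ (distinct monomials evaluate to distinct integers under distinct primes), we get $x < y$, which is exactly the local bound. Otherwise no preimage shares a prime power with $y$, so every $x \in f^{-1}(y)$ has each exponent strictly below the corresponding exponent of $y$, whence $x \mid p_1^{m_1 - 1} \cdots p_k^{m_k - 1}$. As the preimages are distinct divisors of this number, their sum is at most $\sigma(p_1^{m_1 - 1} \cdots p_k^{m_k - 1})$, which is strictly less than $p_1^{m_1} \cdots p_k^{m_k} = y$ by Lemma \ref{SigmaLemma}.

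I expect the main obstacle to be the careful bookkeeping in the second case: justifying that ``no shared prime power'' together with (a) forces every exponent of $x$ to drop strictly below that of $y$, so that $x$ divides $p_1^{m_1-1}\cdots p_k^{m_k-1}$, and justifying that distinctness of the square-form entries lets me bound the fiber sum by the full divisor sum $\sigma$. Once these two points are secured, Lemma \ref{SigmaLemma} closes the case immediately, and the remaining summation is routine.
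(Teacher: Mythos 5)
Your proposal is correct and takes essentially the same approach as the paper's proof: both establish the strict inequality $\sum_{y \in Y} y > \sum_{x \in X} x$ by comparing each $y$ in the image of $f$ against its fiber, using divisibility from hypothesis (a) for fibers where a prime power is shared (hence, by (b), singleton fibers) and Lemma \ref{SigmaLemma} for fibers where every exponent drops strictly. Your version merely makes explicit some bookkeeping the paper leaves implicit, such as bounding the fiber sum by the divisor sum $\sigma(p_1^{m_1-1}\cdots p_k^{m_k-1})$ and noting $\mathrm{im}(f) \subseteq Y$.
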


\begin{proof}
We show that for any distinct primes assigned to such a form, the sum of all elements in $Y$ is greater than the sum of all elements in $X$. To do so, we show that every $y$ in the image of $f$ is greater than the sum of the elements that map to it. If $y$ is mapped to by just one element $x$, then $y$ must be greater than $x$ since $y$ is a multiple of $x$. Alternatively, if $y$ is mapped to by more than one element, then every element $x$ that maps to $y$ must be a factor of $y$ where every prime factor is raised to a lesser degree in $x$ than in $y$. It follows directly from Lemma \ref{SigmaLemma} that $y$ is greater than the sum of all such values of $x$. Therefore, $X$ and $Y$ cannot have the same sum, so the square form is unacceptable.
\end{proof}

As an example of zones satisfying the hypotheses of Lemma \ref{BiggerZoneLemma}, let $X = \{a^3b^5, \allowbreak a^3b^3c^2, a^4, ab^3c^4\}$, and let $Y = \{a^4b^5, b^2, ac, a^6b^4c^5\}$. (It is easy to confirm that these zones have the same product and no repeating elements.) Define $f: X \to Y$ as mapping $a^3b^5$ to $a^4b^5$ and everything else to $a^6b^4c^5$. Then every element in $X$ is mapped to a multiple of itself. Furthermore, $a^4b^5$ is mapped to by $a^3b^5$, and there is a prime factor (namely, $b$) that is raised to the same power in both $a^4b^5$ and $a^3b^5$, but the hypotheses are still satisfied since no other element maps to $a^4b^5$. In contrast, $a^6b^4c^5$ is mapped to by multiple elements, but none have $a$ raised to the power of 6, $b$ raised to the power of 4, or $c$ raised to the power of 5, so the hypotheses are still satisfied.

Another way of showing that two zones cannot have the same sum is by showing that they cannot be congruent modulo a certain number. One fundamental result of this kind occurs in pairwise zones $X$ and $Y$ where a prime factor $p$ attains its minimum order exactly once throughout these zones --- that is, where there exists some element $x$ in $X$ or $Y$ such that $p$ is raised to a lower power in $x$ than in any other element of $X$ or $Y$.

\begin{lemma}
\label{MinimumOrderLemma}
Suppose a square form has two pairwise zones $X$ and $Y$ such that some prime factor $p$ attains its minimum order exactly once in these zones; that is, suppose there exists some element $x$ in $X$ or $Y$ such that $p$ is raised to a lower power in $x$ than in any other element of $X$ or $Y$. Then the square form is unacceptable.
\end{lemma}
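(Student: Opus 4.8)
The plan is to argue by contradiction via a reduction modulo $p$, in the same spirit as the congruence argument in Proposition~\ref{ExampleSquareProposition}. I would begin by assuming that the square form is acceptable, so that there is some assignment of distinct primes to the variables making the square additive. Since $X$ and $Y$ are pairwise zones, under this assignment they must have the same sum; call this common value $S$. The crucial point to record first is that all exponents appearing in the elements of $X \cup Y$ are fixed by the form itself, independent of the assignment, so the hypothesis singles out a well-defined minimum power $m$ of $p$ together with a unique element $x \in X \cup Y$ attaining it.

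Next I would normalize by the common factor $p^m$. Because $m$ is the minimum power of $p$ over $X \cup Y$, every element of $X \cup Y$ is divisible by $p^m$; hence the sums of the elements of $X$ and of $Y$ are each divisible by $p^m$, and I may divide the equality (both sides equal to $S$) by $p^m$ over the integers. After this division, the distinguished element becomes $x/p^m$, which is a product of primes other than $p$ and is therefore coprime to $p$, whereas every other element of $X \cup Y$ still carries at least one factor of $p$ (each had a strictly larger power of $p$ than $x$, i.e.\ at least $m+1$, so retains power at least $1$ after dividing by $p^m$).

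I would then reduce the normalized equality modulo $p$. Here essentially everything collapses: on whichever side $x$ lies, a single term survives, namely $x/p^m$, while on the other side every term vanishes. This forces $x/p^m \equiv 0 \pmod{p}$, contradicting the fact that $x/p^m$ is coprime to $p$. The cases $x \in X$ and $x \in Y$ are handled identically by symmetry, and the degenerate situation in which $x$ is the only element of its zone causes no trouble, since the opposite zone is nonempty by the definition of pairwise zones.

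The only real care required --- and the nearest thing to an obstacle --- is the bookkeeping around the normalization: verifying that both sums are genuinely divisible by $p^m$ so that the division is legitimate over the integers, and verifying that after dividing every element except $x/p^m$ retains a factor of $p$. Both facts follow immediately from the \emph{uniqueness} of the minimum order, so I anticipate no genuine difficulty; once the setup is in place, the contradiction is a one-line modular observation.
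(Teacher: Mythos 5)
Your proof is correct and is essentially the paper's own argument: both isolate the unique element $x$ of minimum $p$-order and derive a contradiction from a congruence in which $x$ is the only surviving term. The only cosmetic difference is that you first divide both sums by $p^m$ and reduce modulo $p$, whereas the paper works directly modulo $p^{k+1}$ without normalizing; the two formulations are equivalent.
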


\begin{proof}
Without loss of generality, suppose $x \in X$. Define $k$ as the order of $p$ in $x$. Since every other element in $X$ has $p$ raised to the power of at least $k + 1$, it follows that the sum of elements in $X$ is congruent to $x \pmod{p^{k+1}}$. However, since \emph{every} element in $Y$ has $p$ raised to the power of at least $k + 1$, it follows that the sum of elements in $Y$ is congruent to $0 \pmod{p^{k+1}}$. Since $x \not\equiv 0 \pmod{p^{k+1}}$, $X$ and $Y$ cannot have the same sum. As such, the square form is unacceptable.
\end{proof}

At this point, we have two lemmas that rule out the acceptability of a square form when certain patterns appear in the pairwise zones. Using these lemmas, we can move on to proving some broader unacceptability theorems.

\begin{theorem}
\label{PrimePowerTheorem}
If a square form has magic product $P$ and an entry $E$ such that $P/E$ is a prime power, then the square form is unacceptable.
\end{theorem}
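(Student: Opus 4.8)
The plan is to exploit the prime-power hypothesis to force an entire row and column (apart from the distinguished entry) to consist of pure powers of a single variable, and then to hand the resulting pair of zones directly to Lemma \ref{MinimumOrderLemma}.

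First I would write $P/E = q^m$, where $q$ is one of the variables and $m$ is a nonnegative integer, and let $E$ occupy the cell at the intersection of a row $R$ and a column $C$. Since the square form is multiplicative, both $R$ and $C$ have product $P$, so the product of the entries of $R$ other than $E$ equals $P/E = q^m$, and similarly for $C$. Every entry is a monomial with nonnegative exponents, so a product of such monomials equal to $q^m$ can involve no variable other than $q$; hence each entry of $R$ and of $C$ apart from $E$ is a pure power of $q$, say of the form $q^j$.

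Next I would check that this produces pairwise zones. Take the first collection to be the single strip $R$ and the second to be the single strip $C$. These collections contain the same number of strips, share no strip, and meet in the single cell holding $E$ (a row and a column intersect in exactly one cell), so every entry appears in the two collections with counts differing by at most one. The region of entries appearing once more in $R$ is then $X = R \setminus \{E\}$, and the region appearing once more in $C$ is $Y = C \setminus \{E\}$; these are disjoint and, since the square has size at least $5$, each is nonempty. Thus $X$ and $Y$ are pairwise zones, and by the previous paragraph every element of $X \cup Y$ is a power of $q$.

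Finally, because all entries of a square form are distinct, the exponents of $q$ occurring across $X \cup Y$ are pairwise distinct, so $q$ attains its minimum order exactly once in these zones. Lemma \ref{MinimumOrderLemma} then immediately gives that the square form is unacceptable. I expect the only real care to lie in the bookkeeping that confirms $X$ and $Y$ meet the pairwise-zone conditions, together with the key observation that the prime-power hypothesis is precisely what collapses a full row and column (minus $E$) into pure powers of $q$; after that the conclusion is a direct appeal to the earlier lemma. (Alternatively, one could bypass the lemma by noting that $\sum_{x \in X} x$ and $\sum_{y \in Y} y$ are the base-$q$ values of two distinct $0/1$-strings and hence cannot be equal.)
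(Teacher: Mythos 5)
Your proposal is correct, and its setup coincides with the paper's: both take the pairwise zones $X = R \setminus \{E\}$ and $Y = C \setminus \{E\}$ and observe that, since their common product $P/E$ is a power of a single prime $q$, every element of $X \cup Y$ is a pure power of $q$, with all exponents distinct because the entries of a square form are distinct. Where you diverge is the final step: you feed these zones to Lemma \ref{MinimumOrderLemma}, noting that distinct exponents force the minimum order of $q$ to be attained exactly once, so the two sums disagree modulo $q^{k+1}$. The paper instead feeds them to Lemma \ref{BiggerZoneLemma}: the unique highest power of $q$ lies in one zone, every element of the other zone is a proper divisor of it with strictly smaller exponent, and the sum-of-divisors bound (Lemma \ref{SigmaLemma}) shows that single element already exceeds the opposing zone's entire sum. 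So the paper's argument is a size (inequality) argument while yours is a congruence argument; both apply instantly once the zones are known to consist of distinct powers of $q$, and your closing remark about base-$q$ representations of $0/1$-strings is really the common kernel of both lemmas (the paper itself makes this base-$p$ observation in the proof of Lemma \ref{PrimeValueLemma}). Your write-up is also more careful than the paper's in one respect: you explicitly verify that $R$ and $C$ satisfy the pairwise-zone conditions rather than asserting it, which is a worthwhile piece of bookkeeping even if routine.
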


\begin{proof}
Consider the pairwise zones that are the row and column that contain $E$ minus the entry of $E$ itself. Since these pairwise zones must multiply to $P/E$ and $P/E$ is a prime power, the zones must consist of distinct powers of a single prime (possibly including to the power of zero). Clearly, the highest of these powers must be in one of these zones. Since every element in the other zone divides this maximum power and has a smaller exponent, it follows from Lemma~\ref{BiggerZoneLemma} that the square form is unacceptable.
\end{proof}

\begin{lemma}
\label{FourTimesLemma}
If a square form has a prime factor that attains its minimum order fewer than four times throughout the square, then the square form is unacceptable.
\end{lemma}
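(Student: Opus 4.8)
The plan is to reduce the claim to Lemma~\ref{MinimumOrderLemma} by exhibiting, for the offending prime $p$, two pairwise zones in which $p$ attains its minimum order exactly once. Write $m$ for the smallest power to which $p$ is raised anywhere in the square, and let $S$ be the set of cells attaining it; by hypothesis $t := |S|$ satisfies $1 \le t \le 3$. If I can produce pairwise zones $X$ and $Y$ whose union meets $S$ in exactly one cell, then within $X \cup Y$ that single cell carries $p$ to the power $m$ while every other cell carries it to a strictly larger power (since those cells lie outside $S$). Thus $p$ attains its minimum order over $X \cup Y$ exactly once, and Lemma~\ref{MinimumOrderLemma} immediately yields unacceptability.

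To build such zones I would use two disjoint parallel strips. Taking the collections $C_1 = \{S_1\}$ and $C_2 = \{S_2\}$ for two distinct rows (or two distinct columns) $S_1, S_2$ satisfies the pairwise-zone definition trivially, and since parallel strips are disjoint the resulting zones are simply $X = S_1$ and $Y = S_2$. So it suffices to find a strip $S_1$ (a row or a column) containing exactly one cell of $S$, together with a parallel strip $S_2$ containing no cell of $S$. The parallel empty strip is never an issue: the cells of $S$ occupy at most three rows and at most three columns, so in a $5 \times 5$ or $6 \times 6$ grid at least $n - 3 \ge 2$ rows and at least $n - 3 \ge 2$ columns are free of them, where $n \in \{5,6\}$ is the side length.

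The real content, and the step I expect to be the main obstacle, is locating a strip meeting $S$ in exactly one cell, because the cells of $S$ may cluster. The key is a counting argument exploiting the bound $t \le 3$: if no row contains exactly one cell of $S$, then every occupied row contains at least two, and since two such rows would already account for at least four cells, all of $S$ must lie in a single row. But cells sharing a row lie in distinct columns, so in that case every occupied column contains exactly one cell of $S$, and I switch to using a column for $S_1$. Hence in all cases a suitable $S_1$ exists, and pairing it with an empty parallel strip produces the desired zones. I would organize the write-up around this dichotomy (some row isolates a cell of $S$, versus all of $S$ lies in one row), noting that the threshold ``fewer than four'' is precisely what forces the single-row conclusion and is therefore the crux of the argument.
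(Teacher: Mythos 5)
Your proof is correct and follows essentially the same route as the paper's: both reduce to Lemma~\ref{MinimumOrderLemma} by pairing a strip containing exactly one minimum-order cell with a parallel strip containing none, using the bound of at most three such cells to guarantee both strips exist. The only difference is organizational --- the paper argues by contradiction (every occupied row and column would need at least two minimum-order cells, forcing at least four in total), while you argue directly via the row/column dichotomy --- but the underlying counting and the appeal to Lemma~\ref{MinimumOrderLemma} are identical.
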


\begin{proof}
For the sake of contradiction, suppose $p$ is a prime factor in an acceptable square form that attains its minimum order $k$ fewer than four times throughout the square. Clearly, $p$ attains order $k$ at least once. Define $C_1$ as a column containing a term where $p$ is raised to the $k$th power. Lemma \ref{MinimumOrderLemma} guarantees that for all other columns $C_2$, $p$ must attain order $k$ at least twice throughout $C_1$ and $C_2$. Since there are at most three entries with $p$ raised to the $k$th power, there are not enough to have one in every column; as such, to satisfy Lemma~\ref{MinimumOrderLemma}, $p$ must attain order $k$ at least twice in $C_1$. That is, if $p$ attains order $k$ in a column, it must do so twice in that column. It similarly holds that if $p$ attains order $k$ in a row, it must attain order $k$ at least twice in that row. Since $p$ attains order $k$ at least once in the square, it must therefore attain this order at least twice in one column --- that is, in at least two rows --- and at least twice in each of these rows. Therefore, it actually does attain this order at least four times, contradicting our initial assumptions.
\end{proof}

\begin{theorem}
If a square form has a magic product of the form $a^n$ (for any positive integer $n$), $a^nb^m$ for $1 \leq m \leq 3$, or $a^nbc$, then the square form is unacceptable.
\end{theorem}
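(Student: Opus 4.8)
The plan is to apply Lemma~\ref{FourTimesLemma} to the prime $a$, exploiting the fact that the magic product severely restricts the exponents of every prime other than $a$. The governing observation is this: in any of the listed products, two distinct entries sharing the same $a$-order must differ in their remaining (non-$a$) factor, and the magic product caps how many such factors are available. If this cap is smaller than $4$, then the minimum order of $a$ is attained fewer than four times, and Lemma~\ref{FourTimesLemma} immediately yields unacceptability. The only way to evade this count is for a non-$a$ part to be ``concentrated'' into a single entry $E$; but in each such case $P/E$ is a power of $a$, so Theorem~\ref{PrimePowerTheorem} applies instead.

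First I would establish the counting claim. Fix the prime factor $a$ of the magic product $P$ and let $k$ denote the minimum order of $a$ over all entries. Every entry of $a$-order exactly $k$ has the form $a^k w$, where $w$ is a product of the remaining prime variables; since all entries are distinct, the number of entries of $a$-order $k$ is at most the number of admissible values of $w$. For $P = a^n$ the only value is $w = 1$; for $P = a^n b^m$ the factor $w$ is a power $b^j$ with $j$ bounded by the largest $b$-order occurring in the square; and for $P = a^n bc$ the factor $w$ lies in $\{1, b, c, bc\}$. I would then read off the cap in each case.

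With the claim in hand, the casework is routine. For $P = a^n$ the cap is $1 < 4$, so $a$ attains its minimum order fewer than four times and Lemma~\ref{FourTimesLemma} finishes. For $P = a^n b^m$ with $m \le 2$ the $b$-order never exceeds $2$, so $w \in \{1, b, b^2\}$ gives a cap of $3 < 4$ and Lemma~\ref{FourTimesLemma} applies again. For $m = 3$ I would split: if some entry contains $b^3$, then dividing $P$ by that entry leaves a power of $a$, so Theorem~\ref{PrimePowerTheorem} applies; otherwise the $b$-order is at most $2$, the cap is again $3$, and Lemma~\ref{FourTimesLemma} applies. For $P = a^n bc$ I would likewise split on whether some entry contains both $b$ and $c$: if so, dividing $P$ by it leaves a power of $a$ and Theorem~\ref{PrimePowerTheorem} applies; if not, then $w$ is confined to $\{1, b, c\}$, the cap is $3 < 4$, and Lemma~\ref{FourTimesLemma} applies.

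The conceptual crux --- and the step I expect to require the most care --- is recognizing that $a$ (the prime with large exponent), rather than the scarce primes $b$ and $c$, is the correct prime to feed into Lemma~\ref{FourTimesLemma}: the scarce primes attain their minimum order (namely $0$) far too often for that lemma to bite, whereas they are precisely what bounds the number of entries sharing each $a$-order. The remaining delicate point is the boundary between the two tools in the $m = 3$ and $bc$ cases; there I would verify that the ``concentrated'' entry $E$ really does make $P/E$ a single power of $a$ (possibly $a^0$), so that Theorem~\ref{PrimePowerTheorem} genuinely applies, and note that the degenerate possibility $P/E = 1$ cannot arise for a valid square form, since its two associated zones would then have to consist of several distinct entries all equal to $1$.
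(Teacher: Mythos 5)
Your proposal is correct and follows essentially the same route as the paper's own proof: count the admissible non-$a$ components of entries attaining the minimum $a$-order, use Theorem~\ref{PrimePowerTheorem} to exclude the ``concentrated'' entry (non-$a$ part equal to $b^3$ or $bc$), and conclude via Lemma~\ref{FourTimesLemma}. Your extra attention to the degenerate case $P/E = 1$ and the observation that $a$ (not $b$ or $c$) is the prime to feed into Lemma~\ref{FourTimesLemma} are sound refinements of the same argument.
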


\begin{proof}
For the sake of contradiction, suppose there exists an acceptable square form with such a product. Define $k$ as the lowest order $a$ attains at any entry in the square. If the magic product is of the form $a^n$ or $a^nb^m$ for $1 \leq m \leq 3$, then in an entry where $a$ is raised to the $k$th power, $a^k$ can be multiplied by nothing, $b$, or $b^2$; Theorem \ref{PrimePowerTheorem} implies that it cannot be multiplied by $b^3$. As such, $a$ can attain order $k$ in at most three entries.

Similarly, if the magic product is of the form $a^nbc$, then in an entry where $a$ is raised to the $k$th power, $a^k$ can be multiplied by nothing, $b$, or $c$ (but not $bc$), so $a$ can once again attain order $k$ in at most three entries. Either way, $a$ attains its minimum order at most three times, which contradicts Lemma \ref{FourTimesLemma}.
\end{proof}

Another interesting class of results deals not with unacceptability itself per se, but restricts the possible numerical values that the prime factors in a given square form can assume to make the square additive. We prove one major lemma along these lines and then show three significant results that follow as special cases of this lemma. Here, it helps to talk about the \emph{components} of an element of a square; let the non-$p$ component of an element refer to the product of all the prime powers of the element aside from $p$ itself. For example, $a^3b^5c^2$ and $a^8b^5c^2$ are different elements, but their non-$a$ components still have the same value, as they are both $b^5c^2$. The following lemma uses this idea to put an upper bound on the values that a prime factor can assume to make the square additive when certain conditions are satisfied.

\begin{lemma}
\label{PrimeValueLemma}
For positive integers $m$ and $n$, suppose a region of a square form can be partitioned into $m + n$ disjoint zones such that the square can only be additive if each zone has the same sum, and suppose there exists a prime factor $p$ such that the non-$p$ components in this region assume at most $m$ different values. Then the square can only be made additive if $p$ is set to a value no greater than $\lceil m/n \rceil$.
\end{lemma}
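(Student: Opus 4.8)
The plan is to assume the square is additive, so that all $m+n$ zones share a common sum $S$, and to derive the bound by analyzing the $p$-adic structure of this sum. Write each entry of the region as $p^{e}c$, where $c$ is its non-$p$ component; by hypothesis $c$ takes at most $m$ distinct values, and the key structural fact is that two distinct entries with the same power of $p$ must have distinct non-$p$ components. In particular, at most $m$ entries attain the least exponent $k$ of $p$ occurring in the region. Since only the sum relations matter, I would factor $p^{k}$ out of $S$ and all entries and assume $k=0$, so that at least one entry of the region is coprime to $p$. Throughout, let $N=m+n$ and let $d\le m$ be the number of distinct non-$p$ components; I will aim for the sharper statement that additivity forces $d>N(p-1)/p$, which rearranges (using $d\le m$) to $n(p-1)<m$, that is, $p\le\lceil m/n\rceil$.

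First I would reduce each zone sum modulo $p$. Only the entries coprime to $p$ survive, so $S\equiv\sum c\pmod p$, the sum running over the coprime entries of the zone, and this residue is the same for every zone. If $S\not\equiv 0\pmod p$, then every one of the $N$ zones must contain a coprime entry; as coprime entries have pairwise distinct components, this forces at least $N=m+n$ distinct components, contradicting $d\le m$. Hence additivity forces $p\mid S$. Moreover, each zone that does contain coprime entries must have their components summing to a multiple of $p$, and since each such component is itself coprime to $p$, such a ``hot'' zone needs at least two coprime entries. Counting coprime entries (at most $m$, with distinct components) then shows that at least $n$ of the zones are ``cold,'' containing no coprime entry at all.

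The core of the argument is a recursion on the number of zones. The cold zones consist entirely of entries divisible by $p$, still share the sum $S$, and number strictly fewer than $N$; dividing their entries by $p$ yields a genuinely smaller instance, to which the sharper statement applies by induction, bounding below the number of components appearing among the cold zones. Aggregating this with the contribution of the hot zones---each of which consumes at least two components whose sum is divisible by $p$---across the successive powers of $p$ should accumulate to the threshold $d>N(p-1)/p$, completing the proof.

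The step I expect to be the main obstacle is exactly this aggregation in the case $p\mid S$. The components used by the hot zones need not be disjoint from those used by the cold sub-instance, so the two lower bounds cannot simply be added; the real content is to show that the equal-sum constraint prevents a hot zone from reusing the cold components ``for free,'' and it is this interaction that pins down the exact constant $(p-1)/p$. I expect the cleanest route is a fractional charging argument: assign each of the $N$ zones weight $(p-1)/p$ and use the modular constraints above, at every power of $p$, to show that each distinct component can absorb total weight at most $1$, giving $d\ge N(p-1)/p$. Upgrading this to the strict inequality $d>N(p-1)/p$ needed for the clean ceiling (for instance when $N$ is a multiple of $p$) is part of the same delicate accounting, and is where uniqueness of base-$p$ representations should enter.
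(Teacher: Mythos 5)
Your proposal is not a complete proof: the step you yourself flag as ``the main obstacle'' --- the aggregation of the hot-zone and cold-zone bounds when $p \mid S$ --- is precisely the mathematical content of the lemma, and it is left as a hope (``should accumulate,'' ``I expect the cleanest route is a fractional charging argument'') rather than an argument. The difficulty is real, not just technical: since the components appearing in cold zones may coincide with the components of the coprime entries in hot zones, your two counts give only $d \geq \max(2H, d')$ rather than anything close to $d \geq 2H + d'$, and the induction hypothesis applied to the cold sub-instance then falls short of the target $d > N(p-1)/p$ by roughly a term proportional to the number of hot zones. Nothing in the proposal explains how the equal-sum constraint forbids this component reuse, and no charging scheme is actually defined. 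The preliminary reductions you do carry out (distinct entries with equal $p$-power have distinct components; if $S \not\equiv 0 \pmod{p}$ then every zone needs a coprime entry, forcing $d \geq N$; if $p \mid S$ then each hot zone needs at least two coprime entries) are all correct, and your reformulation $d > N(p-1)/p$ is indeed equivalent to the stated bound, but the proof of that reformulation is missing.

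For comparison, the paper's proof avoids the modular/inductive machinery entirely and never confronts the overlap problem. For each of the at most $m$ component values, take the \emph{maximal} element of the region with that component (highest power of $p$); these maximal elements lie in at most $m$ zones. Let $Y$ be a union of $m$ zones containing all of them and $X$ the union of the remaining $n$ zones, so that $\operatorname{sum}(Y) = mS$ and $\operatorname{sum}(X) = nS$, whence $\lceil m/n \rceil \cdot \operatorname{sum}(X) \geq \operatorname{sum}(Y)$. Mapping each element of $X$ to the maximal element of $Y$ with the same component and applying pigeonhole, some maximal element $p^k y$ satisfies
\begin{equation*}
p^k \leq \lceil m/n \rceil \left( p^{k_1} + \cdots + p^{k_j} \right)
\end{equation*}
with the $k_i$ distinct and all less than $k$; if $p > \lceil m/n \rceil$ the right side is at most $(p-1)(p^{k-1} + \cdots + p + 1) = p^k - 1$, a contradiction by uniqueness of base-$p$ representations. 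Note that this is where the base-$p$ idea you reserved for the ``delicate accounting'' actually enters, and it does so after a single global comparison of two unions of zones rather than zone-by-zone modular analysis. If you want to salvage your approach, you would need to invent the charging argument in full; as it stands, the proposal is a plausible research plan with its central lemma unproved.
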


\begin{proof}
For every value of non-$p$ component that appears in this region, the region must have a maximum element with that non-$p$ component --- that is, an element such that no other element in the region has the same non-$p$ component and has $p$ raised to a higher power. As such, since the non-$p$ components in this region assume at most $m$ different values, there exist at most $m$ elements that are maximal with respect to their non-$p$ component. The region is partitioned into $m + n$ disjoint zones, and these maximal elements clearly can appear in at most $m$ of the zones, so define $Y$ as the union of $m$ zones such that each maximal element is in $Y$, and define $X$ as the union of the remaining $n$ zones.

Observe that the sum of the elements in $X$ must be $n$ times the sum of each individual zone, and the sum of the elements in $Y$ must be $m$ times the sum of each individual zone. As such, if we define $X'$ as the set of elements in $X$ but where each is multiplied by $\lceil m/n \rceil$, then the sum of elements in $X'$ must be at least the sum of elements in $Y$. Define the function $f: X' \to Y$ such that each element in $X'$ is mapped to the maximal element in $Y$ with the same non-$p$ component. Since the sum of elements in $X'$ is at least the sum of elements in $Y$, there is at least one element in the image of $f$ that is less than or equal to the sum of elements that map to it. Denote this element as $p^ky$, where $k$ is the order of $p$ in the element and $y$ is the non-$p$ component. Since the elements of $X'$ that map to $p^ky$ are all of the form $\lceil m/n \rceil p^{k_i}y$ where the $k_i$ are distinct and less than $k$ itself, and since $p^ky$ is less than or equal to the sum of elements that map to it, we have that
\begin{equation*}
p^ky \leq \lceil m/n \rceil p^{k_1}y + \cdots + \lceil m/n \rceil p^{k_j}y,
\end{equation*}
where $j$ is the number of elements that map to $p^ky$. Dividing $y$ from both sides, this gives us
\begin{equation*}
p^k \leq \lceil m/n \rceil p^{k_1} + \cdots + \lceil m/n \rceil p^{k_j}.
\end{equation*}

Now, we just need to prove that this inequality cannot be true if $p$ is greater than $\lceil m/n \rceil$. Since $k$ is greater than each $k_i$ and the $k_i$ are distinct, this follows simply from viewing the above expressions as base-$p$ representations of numbers.
\end{proof}

The remaining theorems on $5 \times 5$ and $6 \times 6$ square forms are all special cases of Lemma \ref{PrimeValueLemma}.

\begin{theorem}
Suppose a square form has magic product $P$ and a diagonal element $D$ such that $P/D$ is of the form $a^nb$ for some distinct primes $a$ and $b$. Then the square can only be additive if $a = 2$.
\end{theorem}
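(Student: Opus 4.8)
The plan is to realize this statement as a special case of Lemma~\ref{PrimeValueLemma} applied with $p = a$ and with the parameters $m = 2$ and $n = 1$, so that the lemma's conclusion $p \le \lceil m/n \rceil = 2$ forces $a = 2$ (the only prime that is at most $2$). The entire task thus reduces to exhibiting a region that partitions into $m + n = 3$ disjoint zones of forced-equal sum whose non-$a$ components take at most $m = 2$ distinct values.

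First I would fix a long diagonal $\Delta$ passing through $D$, say with $D$ at position $(i,i)$; the anti-diagonal case is identical by symmetry, and in the $5 \times 5$ center case either diagonal works. The entry $D$ lies simultaneously on $\Delta$, on the row $R$ through $D$, and on the column $C$ through $D$. Each of these three strips has product $P$, so deleting $D$ from each yields three zones $\Delta \setminus \{D\}$, $R \setminus \{D\}$, and $C \setminus \{D\}$, each of product exactly $P/D = a^n b$. The key consequence I would extract is that every entry of all three zones is a pure power of $a$ and $b$, since any other prime factor would have to divide $a^n b$; moreover the total exponent of $b$ within each zone is exactly $1$, so each entry has $b$-part equal to $1$ or to $b$. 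Hence the non-$a$ components over the whole region assume only the two values $1$ and $b$, giving the bound $m = 2$.

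Next I would verify the two structural hypotheses of the lemma. For disjointness, a row, a column, and a diagonal all passing through a common cell meet pairwise only at that cell $D$, which has been removed, so the three zones are genuinely disjoint; their union is the region $(R \cup C \cup \Delta) \setminus \{D\}$, which they therefore partition. For the equal-sum condition, additivity makes $R$, $C$, and $\Delta$ each sum to the magic sum $S$, so each of the three zones sums to $S - D$, and additivity indeed forces all three to coincide. With $m + n = 3$ disjoint equal-sum zones and non-$a$ components taking at most $m = 2$ values, Lemma~\ref{PrimeValueLemma} yields $a \le \lceil 2/1 \rceil = 2$, i.e. $a = 2$.

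The point to handle carefully — more a matter of bookkeeping than a genuine obstacle — is pinning down $m$ and $n$ exactly. It is essential that $D$ be a \emph{diagonal} entry: the row and column alone supply only two zones with $m = 2$, which would force the invalid parameter $n = 0$, and it is precisely the third zone coming from the diagonal that makes $n = 1$ and hence $\lceil m/n \rceil = 2$. I would also confirm that both non-$a$ component values $1$ and $b$ actually occur — each zone has at least four entries, so it contains pure-$a$ entries in addition to its single $b$-carrying entry — so that $m = 2$ is the correct count and the stronger-but-false bound $m = 1$ (which would wrongly demand $a \le 1$) never arises.
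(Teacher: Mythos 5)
Your proposal is correct and is essentially identical to the paper's own proof: both apply Lemma~\ref{PrimeValueLemma} with the three disjoint zones obtained by deleting $D$ from its row, its column, and a long diagonal through it, note that every remaining entry has the form $a^k$ or $a^kb$ so the non-$a$ components take at most two values, and conclude $a \le \lceil 2/1 \rceil = 2$. Your extra verifications (pairwise disjointness, and that both values $1$ and $b$ actually occur) are fine but the latter is not needed, since the lemma only requires \emph{at most} $m$ distinct non-$a$ components.
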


\begin{proof}
If such an element $D$ exists in a square form, there must exist three disjoint zones that multiply to $a^nb$ and have the same sum: the row containing $D$ excluding $D$ itself, the column containing $D$ excluding $D$ itself, and the long diagonal containing $D$ excluding $D$ itself. The elements in these zones must all be of the form $a^k$ or $a^kb$, so there are at most two possible non-$a$ components. Since the hypotheses of Lemma \ref{PrimeValueLemma} are satisfied with 3 zones and 2 possible non-$a$ components, we have that $a \leq 2$. Clearly, this means $a$ must be equal to 2.
\end{proof}

Since odd-sized squares have a center cell, we can use a similar idea to get another result in the $5 \times 5$ case.

\begin{theorem}
If a $5 \times 5$ square form has magic product $P$ and a center element $C$ such that $P/C$ is of the form $a^nb$ for some distinct primes $a$ and $b$, then the square form is unacceptable. Furthermore, if a $5 \times 5$ square form has magic product $P$ and a center element $C$ such that $P/C$ is of the form $a^nb^2$ for some distinct primes $a$ and $b$, then the square can only be additive if $a$ is equal to 2 or 3.
\end{theorem}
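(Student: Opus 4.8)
The plan is to mimic the proof of the preceding theorem, but to exploit the fact that in an odd-sized square the center cell lies on \emph{both} long diagonals, giving four strips through $C$ rather than three. First I would observe that the center element $C$ of a $5 \times 5$ square form sits on exactly four strips: its row, its column, and both long diagonals. These four strips pairwise intersect only at the center cell, so after deleting $C$ from each of them we obtain four pairwise-disjoint zones, each consisting of the four remaining entries of a strip through $C$. If the square is additive with magic sum $S$, then each of these zones has sum $S - C$; moreover each zone has product $P/C$. Thus the union of the four zones forms a region partitioned into four disjoint zones that must all share a common sum, placing us squarely in the setting of Lemma \ref{PrimeValueLemma}.

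For the first claim, I would take $p = a$. Since $P/C = a^n b$, every entry of the region has the form $a^k$ or $a^k b$, so its non-$a$ component is either $1$ or $b$; hence the non-$a$ components assume at most $m = 2$ distinct values. With $m + n = 4$ zones we have $n = 2$, and Lemma \ref{PrimeValueLemma} forces $a \leq \lceil m/n \rceil = \lceil 2/2 \rceil = 1$. Since no prime is at most $1$, this is a contradiction, so the square form is unacceptable.

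For the second claim, I would again take $p = a$. Now $P/C = a^n b^2$, so every entry has the form $a^k$, $a^k b$, or $a^k b^2$, and its non-$a$ component lies in $\{1, b, b^2\}$; thus there are at most $m = 3$ distinct non-$a$ components. With $m + n = 4$ zones we have $n = 1$, and Lemma \ref{PrimeValueLemma} gives $a \leq \lceil 3/1 \rceil = 3$, so $a$ must equal $2$ or $3$, as claimed.

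The only point that requires care---and it is precisely the feature that separates this result from the earlier one---is the disjointness claim: one must verify that the row, the column, and the two long diagonals through the center of a $5 \times 5$ grid meet pairwise only at the center, so that removing $C$ genuinely leaves four disjoint four-element zones. This is a quick coordinate check with the center at position $(3,3)$, and it is exactly the structure that is unavailable for a non-central diagonal entry, which is why that case supplied only three zones and hence the weaker conclusion $a = 2$. Everything else is a direct bookkeeping of non-$a$ components feeding into Lemma \ref{PrimeValueLemma}, so I anticipate no further obstacle.
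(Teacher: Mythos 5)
Your proposal is correct and follows essentially the same route as the paper: the same four disjoint zones (the row, column, and both long diagonals through the center, each minus $C$), the same application of Lemma \ref{PrimeValueLemma} with $m=2$, $n=2$ forcing $a \leq 1$ in the first case and $m=3$, $n=1$ giving $a \leq 3$ in the second. Your explicit verification that the four strips meet pairwise only at the center is a detail the paper leaves implicit, but it does not change the argument.
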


\begin{proof}
First, suppose $P/C$ is of the form $a^nb$. Then there must exist four disjoint zones that multiply to $a^nb$ and have the same sum: the row containing $C$ excluding $C$ itself, the column containing $C$ excluding $C$ itself, and the two long diagonals containing $C$ excluding $C$ itself. The elements in these zones must all be of the form $a^k$ or $a^kb$, so there are at most two possible non-$a$ components. Since the hypotheses of Lemma \ref{PrimeValueLemma} are satisfied with 4 zones and 2 possible non-$a$ components, we have that $a \leq 1$. However, it is impossible for a prime number to be less than or equal to 1, so the square form is unacceptable.

Next, suppose $P/C$ is of the form $a^nb^2$. Then there must exist four disjoint zones that multiply to $a^nb^2$ and have the same sum: the row containing $C$ excluding $C$ itself, the column containing $C$ excluding $C$ itself, and the two long diagonals containing $C$ excluding $C$ itself. The elements in these zones must all be of the form $a^k$, $a^kb$, or $a^kb^2$, so there are at most three possible non-$a$ components. Since the hypotheses of Lemma \ref{PrimeValueLemma} are satisfied with 4 zones and 3 possible non-$a$ components, we have that $a \leq 3$. Clearly, this means $a$ must be equal to 2 or 3.
\end{proof}

\begin{theorem}
If a $6 \times 6$ square form has a magic product $P$ of the form $a^nb^4$, then the square can only be additive if $a = 2$. Furthermore, if a $6 \times 6$ square form has a magic product $P$ of the form $a^nb^5$ or $a^nb^2c$, then the square can only be additive if $a = 2$, $3$, or $5$. Finally, if a $5 \times 5$ square form has a magic product $P$ of the form $a^nb^4$, then the square can only be additive if $a$ is equal to $2$ or $3$.
\end{theorem}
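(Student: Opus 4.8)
The plan is to treat all four claims uniformly as applications of Lemma~\ref{PrimeValueLemma} with $p = a$, taking the $m+n$ zones to be the $N$ rows of the square (where $N = 5$ or $6$). These rows partition the board and all carry the magic sum, so additivity forces them to have equal sums, and the hypotheses on the zones are met automatically. The only quantity left to control is $m$, the number of distinct non-$a$ components appearing in the square. Once $m$ is bounded I set $n = N - m$, read off $a \le \lceil m/n \rceil$, and then invoke primality to convert this into the stated list of allowed values, using that the primes at most $4$ are exactly $2,3$ and the primes at most $5$ are exactly $2,3,5$.

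The crux is a dichotomy that keeps $m$ small. First I would ask whether some entry $E$ has $P/E$ equal to a power of $a$; equivalently, whether some entry carries the entire non-$a$ part of $P$ (all of $b^4$, $b^5$, or $b^2c$, respectively). If so, then $P/E$ is a prime power, and distinctness rules out the degenerate case $P/E = 1$, since that would force $E = P$ and hence make every other entry in $E$'s row equal to $1$; thus the exponent is positive and Theorem~\ref{PrimePowerTheorem} immediately gives unacceptability, making the implication vacuously true. Otherwise no entry attains the full non-$a$ part, so every non-$a$ component is a proper divisor of it, trimming the available palette by exactly one.

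It then remains to plug in the counts. For the $6\times6$ form with product $a^nb^4$, the surviving components lie in $\{1,b,b^2,b^3\}$, so $m \le 4$, $n = 2$, and $a \le \lceil 4/2 \rceil = 2$. For $6\times6$ with $a^nb^5$ they lie in $\{1,b,b^2,b^3,b^4\}$, so $m \le 5$, $n = 1$, and $a \le 5$, that is, $a \in \{2,3,5\}$; for $6\times6$ with $a^nb^2c$ the proper divisors of $b^2c$ are $\{1,b,b^2,c,bc\}$, again giving $m \le 5$, $n = 1$, and $a \in \{2,3,5\}$. Finally, for $5\times5$ with $a^nb^4$ the palette is again $\{1,b,b^2,b^3\}$ with $m \le 4$, but now $n = 1$, so $a \le 4$ and hence $a \in \{2,3\}$.

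I expect the main obstacle to be bookkeeping rather than ideas. I must check in each case that $n = N - m$ is a positive integer (it is, equalling $2,1,1,1$ respectively) so that Lemma~\ref{PrimeValueLemma} genuinely applies, and I must justify that each entry's non-$a$ component divides the non-$a$ part of $P$, which follows because the non-$a$ components along any row multiply to exactly that part. The one genuinely delicate point is the prime-power branch, where I have to use distinctness to exclude $P/E = 1$, since Theorem~\ref{PrimePowerTheorem} is only useful for a nontrivial prime power. The final translation from $a \le \lceil m/n \rceil$ to an explicit set of primes also relies on $4$ not being prime, which is precisely what lets the bound $a \le 4$ collapse to $a \in \{2,3\}$.
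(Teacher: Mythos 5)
Your proposal is correct and follows essentially the same route as the paper: rows as the $m+n$ zones of Lemma~\ref{PrimeValueLemma}, Theorem~\ref{PrimePowerTheorem} to exclude entries carrying the full non-$a$ part of $P$ (the paper states this parenthetically rather than as an explicit dichotomy), and the same counts $m \le 4,5,5,4$ with $n = 2,1,1,1$ yielding the bounds $2, 5, 5, 4$ on $a$. Your extra care in ruling out the degenerate case $P/E = 1$ via distinctness is a small point of rigor the paper leaves implicit, but it does not change the argument.
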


\begin{proof}
First, consider a $6 \times 6$ square form where $P = a^nb^4$. Then for the square form to be acceptable, the elements in the square must all be of the form $a^k$, $a^kb$, $a^kb^2$, or $a^kb^3$. (Theorem \ref{PrimePowerTheorem} implies that they cannot be of the form $a^kb^4$.) Consider the zones given by the six rows of the square. Since the hypotheses of Lemma \ref{PrimeValueLemma} are satisfied with 6 zones and 4 possible non-$a$ components, we have that $a \leq 2$. Clearly, this means $a$ must be equal to 2.

Next, consider a $6 \times 6$ square form where $P$ is $a^nb^5$ or $a^nb^2c$. If $P = a^nb^5$, then for the square form to be acceptable, the elements in the square must all be of the form $a^k$, $a^kb$, $a^kb^2$, $a^kb^3$, or $a^kb^4$ (but not $a^kb^5$). If $P = a^nb^2c$, then for the square form to be acceptable, the elements in the square must all be of the form $a^k$, $a^kb$, $a^kb^2$, $a^kc$, or $a^kbc$ (but not $a^kb^2c$). Again, consider the zones given by the six rows of the square. Since in both cases, the hypotheses of Lemma \ref{PrimeValueLemma} are satisfied with 6 zones and 5 possible non-$a$ components, we have that $a \leq 5$. Clearly, this means $a$ must be equal to 2, 3, or 5.

Finally, consider a $5 \times 5$ square form where $P = a^nb^4$. Then for the square form to be acceptable, the elements in the square must all be of the form $a^k$, $a^kb$, $a^kb^2$, or $a^kb^3$ (but not $a^kb^4$). Consider the zones given by the five rows of the square. Since the hypotheses of Lemma \ref{PrimeValueLemma} are satisfied with 5 zones and 4 possible non-$a$ components, we have that $a \leq 4$. As such, $a$ must be equal to 2 or 3.
\end{proof}

\section{Future Work}

At this point, we have shown that an acceptable square form must have all of its pairwise zones satisfy certain properties. We have also shown that an acceptable square form cannot have a magic product $P$ and an element $E$ such that $P/E$ is a prime power, cannot have a magic product $P$ and a center element $C$ such that $P/C$ is a prime power multiplied by another prime, nor can it have a magic product of the form $a^n$, $a^nb^m$ for $1 \leq m \leq 3$, or $a^nbc$. Finally, we have found cases in which we can restrict the numerical values that the prime factors in a square form can assume to make the square additive.

An approach to making further progress could be to try to expand these results --- for example, one ambitious goal might be to prove that an acceptable square form would have to have at least three distinct prime factors, assuming this is even true. Possible other tools for proving unacceptability results might include polynomial factorization or the AM-GM inequality. Yet another approach may be to develop algorithms or further techniques for proving the unacceptability of individual square forms, as was done in Proposition \ref{ExampleSquareProposition}.

\section{Acknowledgments}

Thanks to Dr. Nick Gurski of Case Western Reserve University's Department of Mathematics, Applied Mathematics, and Statistics for reviewing an earlier draft of this paper and encouraging me to submit it for publication during my time as an undergraduate.

\end{document}